\documentclass{article}
%    If you need symbols beyond the basic set, uncomment this command.
\usepackage{amsmath,amsthm,amssymb,xcolor}

\usepackage{subfigure,comment}

%    If your article includes graphics, uncomment this command.
\usepackage{graphicx}
\graphicspath{
 {./}
}
%    If the article includes commutative diagrams, ...
%\usepackage[cmtip,all]{xy}

%    Include other referenced packages here.
\usepackage{url}
\usepackage{wrapfig}

%    Update the information and uncomment if AMS is not the copyright
%    holder.o
%\copyrightinfo{2017}{Tomohiro Tachi}

% remove for final version
%\usepackage[inline]{showlabels}
%\def\showlabelfont{\tiny\tt}

\newtheorem{theorem}{Theorem}

\theoremstyle{definition}
\newtheorem{definition}[theorem]{Definition}

\theoremstyle{remark}

% Generate hyperlinked DVI, PS, and PDF.
% Best used either with "latex" + "dvips -z", or with "pdflatex".
% If you generate PS or PDF via another mechanism, e.g., "dvipdf",
% you should add e.g. [dvipdf] to the \usepackage{hyperref} line.
% pdftitle is set automatically, but you can override and/or set pdfauthor.
\usepackage
  [breaklinks,bookmarks,bookmarksnumbered,bookmarksopen,bookmarksopenlevel=2]
  {hyperref}
{\makeatletter \hypersetup{pdftitle={\@title}}}

{\makeatletter
 \gdef\xxxmark{%
   \expandafter\ifx\csname @mpargs\endcsname\relax % in minipage?
     \expandafter\ifx\csname @captype\endcsname\relax % in figure/caption?
       \marginpar{xxx}% not in a caption or minipage, can use marginpar
     \else
       xxx % notice trailing space
     \fi
   \else
     xxx % notice trailing space
   \fi}
 \gdef\xxx{\@ifnextchar[\xxx@lab\xxx@nolab}
 \long\gdef\xxx@lab[#1]#2{\textbf{[\xxxmark #2 ---{\sc #1}]}}
 \long\gdef\xxx@nolab#1{\textbf{[\xxxmark #1]}}

\def\vec{\mathbf}

\newcommand{\trans}{^\textup{\textsf{T}}}

\title{A Proper Definition of Higher Order Rigidity}
\author{Tomohiro Tachi}

\begin{document}
\maketitle
\begin{abstract}
\cite{connelly-servatius:1994} shows the difficulty of properly defining $n$-th order rigidity and flexiblity of a bar-and-joint framework for higher order ($n\geq3$) through the introduction of a cusp mechanism.
I propose a ``proper'' definition of the order of rigidity by the order of elongation of the bars with respect to the arclength along the path in the configuration space.
We show that the classic definition using formal $n$-th derivative of the length constraint is a sufficient condition for the $n$-th flexiblity in the proposed definition and also a necessary condition only for $n=1,2$.
%New definition potentially allows for a rational number rigidity, but I have not found an example that is tightly non-integer order rigid.
\end{abstract}
\section{Introduction}
Classicly, $n$-th order infinitesimal flexiblity [or rigidity] is defined by the existence [or nonexistence] of nominal $k$-th derivative of the vertex coordniates for $k=1,\dots n$, called $k$-th order flex, that make the $1$-st to $n$-th order derivative of length constraints being $0$, and that the first order flex is nontrivial.
This is a natural definition derived by the Taylor series expansion of the edge length.
Intuitively, the higher the order, the closer the system is to a finite mechanism.
So, the proper definition should satsfy the following: (1) $n$-th order rigidity implies $n+1$-th order rigidity and (2) $n$-th order rigidity implies finite rigidity (equivalently, (1') $n+1$-th order flexiblity implies $n$-th orider rigidity and (2) finite flexiblity implies $n$-th order flexiblity).
However, the cusp mechanism introduced in \cite{connelly-servatius:1994} is a counterexample, which is third order rigid but is finitely flexible; so the classic definition of higher order rigidity is not proper.

I propose a ``proper'' definition of $n$-th order rigidity using the arclength along the configuration space to solve this issue.
The proposed definition is compatible with the classic definition, in the sense that the classic definition is a sufficient condition for the $n$-th flexiblity in the proposed definition, and that the first and second order rigidity are equivalent in both definitions.

\section{Definition}
\begin{definition}[Finite Flexibility of a bar-and-joint framework]
A \emph{bar-and-joint framework} or a \emph{truss}, is a system of $V$ vertices in $\mathbb R^3$ and $E$ \emph{bars}.
Each vertex coordinate is $\vec x_u$ ($u=1, \dots, V$), and a bar $e_{u,v}$ connecting betweenvertices $u$ and $v$ have prescribed length $\ell_{u,v}$.
A \emph{configuration} is a set of $\vec x_u$  ($u=1, \dots, V$) satisfying the length of $e_{u,v}$ being $\ell_{u,v}$.
The configuration can be described as a point $\vec X$ in $\mathbb R^{3V}$ by concatenating the vertex coordinates as a single column.
The \emph{configuration space} of the truss is the set of configurations in $\mathbb R^{3V}$.
Here, we assume that sufficient number of vertices are fixed in space, so that the rigid body motion of the whole cannot happen.
We say the system \emph{(finitely) flexible} at a configuration $\vec X_0$ if there is a non-trivial $C^\infty$ continuous path $\vec X(t)$ in the configuration space passing through $\vec X(0) = \vec X_0$ that lies entirely in the configuration space; otherwise it is \emph{(finitely) rigid}.
\end{definition}
Precisely, let $\vec x_u = [x_u, y_u, z_u]\trans$ denote the coordinate of vertex $u$ ($u=1, \dots, V$);
then the configuration of the truss is
\begin{align*}
\vec X &= 
\begin{bmatrix}
\vec x_1 \\ \vdots \\ \vec x_u \\ \vdots \\ \vec x_V
\end{bmatrix}
=
\begin{bmatrix}
x_1\\ y_1 \\ z_1 \\ \vdots \\ x_V \\ y_V \\ z_V
\end{bmatrix},
\end{align*}
Also, let the $e$-th edge of all $E$ edges be incident to $u$ and $v$ and has prescribed length $\ell_{u,v}$.
Then, the length of the edge is preserved and thus the elongation measured by the squred length is
$$
D_e (\vec X) := \left(\vec x_u - \vec x_v \right)\trans\left(\vec x_u - \vec x_v \right) - \ell^2_{u,v},
$$
must be $0$ for each edge.
Note that $D_e$ is a proper measure for the actual elongation $d_e:=\left\|\vec x_u - \vec x_v \right\| - \ell_{u,v}$ because
$D_e=(\left\|\vec x_u + \vec x_v \right\| + \ell_{u,v}) d_e = \Theta(d_e)$ as $t$ approaches to $0$ because the bar lengths are finite and $\left\|\vec x_u + \vec x_v \right\| + \ell_{u,v}\neq 0$.
This can be respresented as a vector equation $\vec D(\vec X) = \vec 0_E$, 
where $e$-th element of $\vec D$ is $D_e$, i.e.,
\begin{align*}
\vec D(\vec X) &= 
\begin{bmatrix}
D_1(\vec X) \\ \vdots \\ D_e(\vec X) \\ \vdots \\ D_E(\vec X).
\end{bmatrix}
\end{align*}
\begin{definition}[Proposed Definition of $n$-th order rigidity]
A pin-and-joint framework is \emph{$n$-th order infinitesimally flexible} ($n\in \mathbb N$) at a configuration $\vec X_0$ if there is a non-trivial $C^\infty$ continuous path $\vec X(t)$ in the configuration space passing through $\vec X(0) = \vec X_0$ that keeps the order of the elongation $D_e$ for every edge $e$ in $o(s^n)$ as $s$ approaches to $0$, where $s=s(t)=\int_0^t \frac{d\left\vert\vec X(t)\right\vert}{dt} dt$ is the arclength along the path%
\footnote{Here, we discuss the order of error as $t$ approaches to $0$. The big O notation $f(t)=O(g(t))$ represents the upper bound, i.e., there exists a constant $k>0$ such that $|g(t)| \leq k\left|f(t)\right|$ for a sufficiently small $t>0$, while small o notation $f(t)=o(g(t))$ represents that $g$ dominates $f$, i.e., for any value $k>0$, $|g(t)| \leq k\left|f(t)\right|$ for a sufficiently small $t>0$. For example, $t^2 = O(t^2) = o(t)$. 
Also, big Omega $f(t)=\Omega(g(t))$ is used to represent the lower bound i.e., $k>0$ such that $|g(t)| \geq k\left|f(t)\right|$ for a sufficiently small $t>0$. If $f(t)=O(g(t))$ and $f(t)=\Omega(g(t))$ at the same time, we get a tight order expression $f(t) = \Theta(g(t))$.}%
, otherwise it is \emph{$n$-th order infinitesimally rigid}.
\end{definition}
\begin{theorem}[Proper]
By definition, $n$-th order rigidity implies $n+1$-th order rigidity.
$n$-th order rigidity implies finite rigidity.
\end{theorem}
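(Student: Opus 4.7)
The plan is to derive both claims almost directly from the definition: the first from the asymptotic inclusion $o(s^{n+1})\subseteq o(s^n)$, and the second from the fact that a finite flex keeps every $D_e$ identically zero. I would prove each implication by contrapositive, exhibiting for each situation a path that serves as the required flex.

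For the first statement, I would let $\vec X(t)$ be a nontrivial $C^\infty$ path through $\vec X_0$ witnessing $(n+1)$-th order flexibility, so that $D_e(\vec X(t))=o(s^{n+1})$ for every edge $e$, where $s=s(t)$ is the arclength. Since $\vec X(t)$ is $C^\infty$ with $\vec X(0)=\vec X_0$, the arclength is continuous and $s(t)\to 0$ as $t\to 0$. Writing $D_e/s^n = s\cdot(D_e/s^{n+1})$, the right hand side is a product of a factor tending to $0$ with a factor tending to $0$, so $D_e=o(s^n)$ along the same path. Hence the same $\vec X(t)$ witnesses $n$-th order flexibility, proving (by contrapositive) that $n$-th order rigidity implies $(n+1)$-th order rigidity.

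For the second statement, I would start from a nontrivial $C^\infty$ path $\vec X(t)$ lying entirely in the configuration space. Then $D_e(\vec X(t))\equiv 0$ for every edge $e$ and every $t$, so trivially $D_e=o(s^n)$ for every $n\in\mathbb N$. Thus the same path is an $n$-th order flex, and finite flexibility implies $n$-th order flexibility; contrapositively, $n$-th order rigidity implies finite rigidity.

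The main point to verify, and the only conceivable obstacle, is that the notion of ``nontrivial'' used for a finite flex matches the one used for an $n$-th order flex. In both cases it should mean that $\vec X(t)$ is not identically $\vec X_0$ in any neighborhood of $t=0$, so that a path nontrivial in the finite sense automatically yields a nontrivial $n$-th order flex, and similarly going the other way. Once this is set, all remaining bookkeeping (well-definedness of $s(t)$, that $s(t)\to 0$, that $D_e$ is smooth along the path) is routine consequences of $\vec X(t)$ being $C^\infty$. Since the underlying analytic content reduces to the two asymptotic facts above, I do not expect any substantive technical difficulty.
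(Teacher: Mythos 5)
Your argument is correct and is exactly the content the paper leaves implicit by stating the theorem ``by definition'': the first claim follows from $o(s^{n+1})\subseteq o(s^n)$ as $s\to 0$, and the second from the fact that a finite flex has $D_e\equiv 0$, hence $D_e=o(s^n)$ for every $n$. Your closing remark about matching the two notions of ``non-trivial'' path is the right thing to check and is unproblematic here, so there is no gap.
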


\section{Flex and Flexibility}
Now we compare the proposed definition with the classic definition.
Consider a motion $\vec X(t)$ where $t=0$ represents the initial state.
The the change of squared length of edge $e=(u,v)$ is $D_e(t) = D_e(\vec X(t))$; then $D_e(0) = 0$.
Now the Taylor series expansion of the squred length gives
\begin{align}
\begin{split}
\label{eq:expansion}
D_e (t) &= \sum_{k=1}^{n} \frac{1}{k!} \sum_{a=0}^{k} \binom{k}{a} \left(\vec x_u^{(a)} - \vec x_v^{(a)} \right)\trans\left(\vec x_u^{(k-a)} - \vec x_v^{(k-a)} \right)t^k + o(t^n)
\end{split}
\end{align}

\begin{definition}[Classic Definition]
The formal derivatives $\left(\vec X^{(1)} \dots, \vec X^{(n)}\right)$ satisfing
\begin{align}
\sum_{a=0}^{k} \binom{k}{a} \left(\vec x_u^{(a)} - \vec x_v^{(a)} \right)\trans\left(\vec x_u^{(k-a)} - \vec x_v^{(k-a)} \right)&=0,
\end{align}
where $\vec X^{(0)} := \vec X$, for every bar connecting $u$ and $v$ and for all $k=1, \dots, n$ is called \emph{$n$-th order flex}.
\end{definition}

\begin{theorem}[Classic $n$-th Order Flexiblity is Sufficient]
A bar-and-joint framework is $n$-th order flexible if there is $n$-th order flex $\left(\vec X^{(1)} \dots, \vec X^{(n)}\right)$ with nontrivial first order flex $\vec X^{(0)}\neq \vec 0$.
\end{theorem}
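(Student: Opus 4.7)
The plan is to exhibit an explicit witness path built as the truncated Taylor polynomial of the given flex. Define
$$\vec{X}(t) \;:=\; \vec{X}_0 + \sum_{k=1}^{n}\frac{t^k}{k!}\,\vec{X}^{(k)}.$$
This path is polynomial, hence $C^\infty$, and nontrivial because $\dot{\vec{X}}(0)=\vec{X}^{(1)}\neq\vec{0}$ by the hypothesis on the first-order flex. Crucially, the actual time derivatives of $\vec{X}(t)$ at $t=0$ coincide with the formal flex derivatives $\vec{X}^{(k)}$ for $k=1,\dots,n$, so the formal data used in the classical definition can be fed directly into the Taylor formula for $D_e$.

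Next I would substitute this path into the expansion (\ref{eq:expansion}). By the classical flex hypothesis, each inner sum $\sum_{a=0}^{k}\binom{k}{a}(\vec{x}_u^{(a)}-\vec{x}_v^{(a)})\trans(\vec{x}_u^{(k-a)}-\vec{x}_v^{(k-a)})$ vanishes for every $k=1,\dots,n$. Hence the polynomial portion of the expansion is identically zero, and only the Taylor remainder survives, giving $D_e(\vec{X}(t))=o(t^n)$ as $t\to 0$ for every edge $e$.

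Finally, I would convert the $o(t^n)$ estimate into the $o(s^n)$ estimate required by the proposed definition. Because $\dot{\vec{X}}(0)=\vec{X}^{(1)}\neq\vec{0}$, the arclength satisfies $s'(0)=\|\vec{X}^{(1)}\|>0$, so $s(t)=\Theta(t)$ near $t=0$ and consequently $o(t^n)=o(s^n)$. Combining the two steps yields $D_e(\vec{X}(t))=o(s^n)$ for every edge, which is exactly the proposed $n$-th order flexibility condition.

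The only real subtlety is the reparametrization from $t$ to $s$: it depends essentially on the nontriviality of the first-order flex, since if $\vec{X}^{(1)}$ were $\vec{0}$ the arclength could decay strictly faster than $t$ and the $o(t^n)$ bound would not translate to $o(s^n)$. Note that no implicit-function-theorem argument is required to lift the polynomial to an exact configuration-space path, because the proposed definition only demands that each $D_e$ be $o(s^n)$ rather than exactly zero along the witness path.
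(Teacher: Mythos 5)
Your proposal is correct and follows essentially the same route as the paper: construct the truncated Taylor polynomial path from the formal flex, observe that the flex equations kill every term of the expansion (\ref{eq:expansion}) up to order $n$ so that $D_e = o(t^n)$, and use $\frac{ds}{dt}\big|_{t=0} = \|\vec X^{(1)}\| \neq 0$ to conclude $s = \Theta(t)$ and hence $D_e = o(s^n)$. Your closing remark—that no lifting to an exact configuration-space path is needed because the proposed definition only requires $o(s^n)$ rather than exact vanishing—is a correct and worthwhile clarification of why the polynomial witness suffices.
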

\begin{proof}
Given $n$-th order flex, $\left(\vec X^{(1)} \dots, \vec X^{(n)}\right)$, construct a path $\vec X(t) = \vec X_0 + \vec X^{(1)}t + \frac{1}{2}\vec X^{(2)}t^2 + \dots + \frac{1}{n!} \vec X^{(n)}t^n$, so that $k$-th derivative of the curve at the initial state equals the $k$-th term of the given flex.
Then, $D_e(t) - \ell^2_e = o(t^n)$.
The arclength along this path is first order with respect to $t$, i.e., $s = \Theta(t)$ because $\frac{ds}{dt} = \left\|\vec X^{(1)}\right\|\neq 0$.
Therefore, $D_e(t) - \ell^2_e = o(s^n)$, and thus it is $n$-th order flexible.
\end{proof}

\begin{theorem}[First and Second Order]
A bar-and-joint framework is first order flexible if and only if there is nontrivial first order flex $\vec X^{(1)}$.
A bar-and-joint framework is second order flexible if there is second order flex $\left(\vec X^{(1)}, \vec X^{(2)}\right)$ with nontrivial first order flex $\vec X^{(1)}$.
\end{theorem}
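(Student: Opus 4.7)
The plan is to separate sufficiency and necessity. The sufficient direction for both $n = 1$ and $n = 2$ is already a corollary of the previous theorem (specialize to $n = 1$ or $n = 2$ via the truncated polynomial path), so the work lies in proving the converses. I will interpret the $n=2$ statement as an \emph{if and only if} in line with the abstract's claim that the classical definition is necessary for $n = 1, 2$.

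The strategy for necessity is to extract a classical flex from the Taylor expansion of a realizing path. Fix a non-trivial $C^\infty$ path $\vec X(t)$ with $\vec X(0) = \vec X_0$ that witnesses the proposed $n$-th order flexibility, and let $m \geq 1$ be the smallest integer with $\vec X^{(m)}(0) \neq \vec 0$. Since $\|d\vec X/dt\| = \Theta(t^{m-1})$, one has $s(t) = \Theta(t^m)$, so the hypothesis $D_e = o(s^n)$ becomes $D_e = o(t^{mn})$. I would then read off the Taylor coefficients of $D_e(t)$ from equation~(\ref{eq:expansion}) and exploit the vanishing of $\vec X^{(j)}$ for $1 \leq j < m$ to eliminate most terms: for $m \leq k < 2m$, every middle index $a \in \{1,\dots,k-1\}$ pairs one factor of order strictly between $0$ and $m$, so the $k$-th coefficient collapses to $\frac{2}{k!}(\vec x_u - \vec x_v)\trans(\vec x_u^{(k)} - \vec x_v^{(k)})$.

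For $n = 1$, setting the $t^m$ coefficient to zero is exactly the classical first-order flex equation applied to $\vec X^{(m)} \neq \vec 0$, giving a non-trivial first-order flex. For $n = 2$, all coefficients through $t^{2m}$ must vanish: those at $t^m,\dots,t^{2m-1}$ each give a first-order flex condition on the corresponding $\vec X^{(k)}$, and the one at $t^{2m}$ picks up an extra middle contribution from $a = m$ equal to $\binom{2m}{m}\|\vec x_u^{(m)} - \vec x_v^{(m)}\|^2$. Setting this coefficient to zero and defining $\vec V^{(1)} := \vec X^{(m)}$ and $\vec V^{(2)} := \frac{2}{\binom{2m}{m}} \vec X^{(2m)}$ then reproduces the classical second-order flex equation precisely, after a short constant check.

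The main obstacle I anticipate is the case $m > 1$: the witness path has vanishing first derivative, so the classical flex must be pieced together from $\vec X^{(m)}$ and $\vec X^{(2m)}$ rather than the ``natural'' $\vec X^{(1)}$ and $\vec X^{(2)}$. The combinatorial bookkeeping for why every intermediate index $a \in \{m+1,\dots,2m-1\}$ contributes zero to the $t^{2m}$ coefficient (its partner $k-a$ lies in $\{1,\dots,m-1\}$, forcing $\vec x^{(k-a)} = \vec 0$) is where the delicate checking is required, as is the constant matching that produces the right coefficient $\frac{2}{\binom{2m}{m}}$. A secondary technicality is $C^\infty$ paths flat at $t=0$ (no finite $m$ exists); I would dispose of these by a separate perturbation argument, or by noting that $D_e = o(s^n)$ combined with flatness imposes enough structure to reduce to a non-flat witness.
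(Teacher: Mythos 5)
Your proposal follows essentially the same route as the paper's own proof: pass to the smallest $m$ with $\vec X^{(m)}(0)\neq\vec 0$ (the paper reuses the letter $n$ for this), observe $s=\Theta(t^m)$ so the hypothesis becomes $D_e=o(t^{mn})$, collapse the Taylor coefficients of $D_e$ through order $2m$ using the vanishing of the lower derivatives, and rescale $\vec X^{(m)}$ and $\vec X^{(2m)}$ into a classical first/second order flex. If anything you are more careful than the paper, which states the substitution constant as $\tfrac{1}{2}\binom{2n}{n}$ rather than your (correct) $2/\binom{2m}{m}$ and does not mention the $C^\infty$-flat-at-$t=0$ case that you flag.
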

\begin{proof}
For necessity for $n=1,2$ cases, consider that $s=\Theta(t^n)$ and thus $\frac{d^{k}s}{dt^{k}}=0$ ($k=1,\dots,n-1$) and $\frac{d^{n}s}{dt^{n}}\neq 0$.
Then, take the first $2n$ term of~(\ref{eq:expansion}).
\begin{align}
\begin{split}
\label{eq:n-term}
D_e (t)
&= \frac{2}{n!} \left(\vec x_u^{(0)} - \vec x_v^{(0)} \right)\trans\left(\vec x_u^{(n)} - \vec x_v^{(n)} \right)t^{n}
+ o(t^n)\\ 
\end{split}
\intertext{and}
\begin{split}
\label{eq:2n-term}
D_e (t) 
&= \sum_{k=0}^{n-1} \frac{2}{(n+k)!} \left(\vec x_u^{(0)} - \vec x_v^{(0)} \right)\trans\left(\vec x_u^{(n+k)} - \vec x_v^{(n+k)} \right)t^{n+k}\\ 
&+ \frac{1}{2n!}\left(
2\left(\vec x_u^{(0)} - \vec x_v^{(0)} \right)\trans \left(\vec x_u^{(2n)} - \vec x_v^{(2n)} \right) 
+
\binom{2n}{n}\left(\vec x_u^{(n)} - \vec x_v^{(n)} \right)\trans \left(\vec x_u^{(n)} - \vec x_v^{(n)} \right) 
\right) t^{2n}\\
&+ o(t^{2n})
\end{split}
\end{align}
If we substitute $\rho_i^{(n)} \rightarrow \rho_i^{(1)}$ and $\frac{1}{2}\binom{2n}{n} \rho_i^{(2n)} \rightarrow \rho_i^{(2)}$, then $t^n$ term of~(\ref{eq:n-term}) eliminates only if there exists a non-trivial first order flex, and $t^{2n}$ term of~(\ref{eq:2n-term}) eliminates only if there exists a second order flex with non-trivial first order flex.
\end{proof}

Notice that the approach used in the proof does not extend any longer.
For example, the coefficient of $t^{3n}$ term eliminates only if
\begin{align}
\begin{split}
& 2 \left(\vec x_u^{(0)} - \vec x_v^{(0)} \right)\trans\left(\vec x_u^{(3n)} - \vec x_v^{(3n)} \right)
+2\binom{3n}{n} \left(\vec x_u^{(n)} - \vec x_v^{(n)} \right)\trans\left(\vec x_u^{(2n)} - \vec x_v^{(2n)} \right)\\
&+ \sum_{k=1}^{n-1} \binom{3n}{n+k} \left(\vec x_u^{(n+k)} - \vec x_v^{(n+k)} \right)\trans\left(\vec x_u^{(2n-k)} - \vec x_v^{(2n-k)} \right)
=0.
 \end{split}
\end{align}
The first two terms represent the third order flex, but there are extra terms making the equation satisfy in a different way making use of from $n+1$ to $2n-1$ terms.
This is an interpretation of the result of~\cite{connelly-servatius:1994} that third order flexiblity cannot be defined by the existence of third order flex. 

\section{Cusp Mechanism}
Let us check how the ``flexes'' of a cusp mechanism could be represented in the example of connected Watt's mechanism introduced in~\cite{connelly-servatius:1994}.
Refer to Figure~\ref{fig:cusp-mechanism}.
\begin{figure}[htbp]
\begin{center}
  \includegraphics[width=1.0\linewidth,page=1]{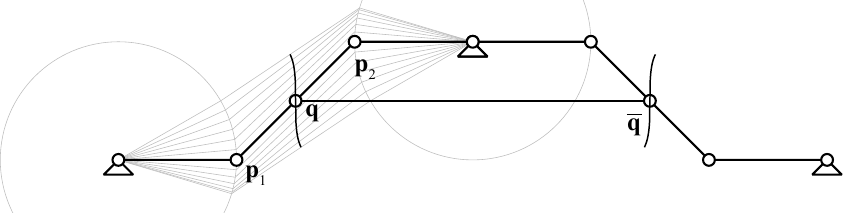}
\end{center}
  \caption{Cusp mechanism constructed by connecting a pair of Watt's mechanisms.}
 \label{fig:cusp-mechanism}
\end{figure}

In this mechanism, we assume the parameterization $s = \Theta(t^2)$.
Then, the derivative of $D_e$ up to $6$-th order is
\begin{align}
\label{eq:d1}
 D_e^{(1)}(0) &=  0\\
\label{eq:d2}
\frac {1}{2} D_e^{(2)}(0) &=  \left(\vec x_u^{(0)} - \vec x_v^{(0)} \right)\trans\left(\vec x_u^{(2)} - \vec x_v^{(2)} \right)\\
\label{eq:d3}
\frac {1}{2} D_e^{(3)}(0) &=  \left(\vec x_u^{(0)} - \vec x_v^{(0)} \right)\trans\left(\vec x_u^{(3)} - \vec x_v^{(3)} \right)\\
\begin{split}
\label{eq:d4}
\frac {1}{2} D_e^{(4)}(0) 
&=  \left(\vec x_u^{(0)} - \vec x_v^{(0)} \right)\trans\left(\vec x_u^{(4)} - \vec x_v^{(4)} \right) \\
&+3 \left(\vec x_u^{(2)} - \vec x_v^{(2)} \right)\trans\left(\vec x_u^{(2)} - \vec x_v^{(2)} \right)
\end{split}\\
\begin{split}
\label{eq:d5}
\frac {1}{2} D_e^{(5)}(0) 
&=  \left(\vec x_u^{(0)} - \vec x_v^{(0)} \right)\trans\left(\vec x_u^{(5)} - \vec x_v^{(5)} \right) \\
&+10 \left(\vec x_u^{(2)} - \vec x_v^{(2)} \right)\trans\left(\vec x_u^{(3)} - \vec x_v^{(3)} \right)
\end{split}\\
\begin{split}
\label{eq:d6}
\frac {1}{2} D_e^{(6)}(0) 
&=  \left(\vec x_u^{(0)} - \vec x_v^{(0)} \right)\trans\left(\vec x_u^{(6)} - \vec x_v^{(6)} \right) \\
&+15 \left(\vec x_u^{(2)} - \vec x_v^{(2)} \right)\trans\left(\vec x_u^{(4)} - \vec x_v^{(4)} \right)\\
&+10 \left(\vec x_u^{(3)} - \vec x_v^{(3)} \right)\trans\left(\vec x_u^{(3)} - \vec x_v^{(3)} \right)
\end{split}
\end{align}
The mechanism is third-order flexible if and only if~(\ref{eq:d1})--(\ref{eq:d6}) are zero.

Point $\vec p_1$ rotating along a circle, whose radius bar direction is initially $\begin{bmatrix}1\\0\end{bmatrix}$,
should have the following flexes:
\begin{align}
\begin{split}
&\left(
\vec p_1^{(1)},
\vec p_1^{(2)},
\vec p_1^{(3)},
\vec p_1^{(4)},
\vec p_1^{(5)},
\vec p_1^{(6)}
\right)
=\\
&\left(
\begin{bmatrix}0\\0\end{bmatrix},
\begin{bmatrix}0\\a_1\end{bmatrix},
\begin{bmatrix}0\\b_1\end{bmatrix},
\begin{bmatrix}-3a_1^2\\c_1\end{bmatrix},
\begin{bmatrix}-10a_1b_1^2\\d_1\end{bmatrix},
\begin{bmatrix}-15a_1c_1-10b_1^2\\e_1\end{bmatrix}
\right)
\end{split}
\end{align}
By symmetry, point $\vec p_2$ has the following $6$-th order flexes:
\begin{align}
\begin{split}
&\left(
\vec p_2^{(1)},
\vec p_2^{(2)},
\vec p_2^{(3)},
\vec p_2^{(4)},
\vec p_2^{(5)},
\vec p_2^{(6)}
\right)
=\\
&\left(
\begin{bmatrix}0\\0\end{bmatrix},
\begin{bmatrix}0\\a_2\end{bmatrix},
\begin{bmatrix}0\\b_2\end{bmatrix},
\begin{bmatrix}3a_2^2\\c_2\end{bmatrix},
\begin{bmatrix}10a_2 b_2\\d_2\end{bmatrix},
\begin{bmatrix}15a_2 c_2+10b_2^2\\e_2\end{bmatrix}
\right)
\end{split}
\end{align}
Now, consider the bar connecting $\vec p_1$ and $\vec p_2$, whose initial direction is $(\vec p_2-\vec p_1)=\begin{bmatrix}1\\1\end{bmatrix}$.
By~(\ref{eq:d2}) and~(\ref{eq:d3}) being $0$, we get $a_1= a_2 = a$ and $b_1=b_2=b$.
By~(\ref{eq:d4}) and~(\ref{eq:d5}) being $0$, we get $c_1-c_2 = 6a^2$ and $d_1-d_2=20ab$.
By~(\ref{eq:d6}) being $0$, we get $e_1-e_2 = 15a(c_1+c_2) + 20b^2$.

The midpint $\vec q$ of $\vec p_1$ and $\vec p_2$ have the following flexes:
\begin{align}
\begin{split}
&\left(
\vec q^{(1)},
\vec q^{(2)},
\vec q^{(3)},
\vec q^{(4)},
\vec q^{(5)},
\vec q^{(6)}
\right)
=\\
&\left(
\begin{bmatrix}0\\0\end{bmatrix},
\begin{bmatrix}0\\a\end{bmatrix},
\begin{bmatrix}0\\b\end{bmatrix},
\begin{bmatrix}0\\ \frac{1}{2}\left(c_1 + c_2\right)\end{bmatrix},
\begin{bmatrix}0\\ \frac{1}{2}\left(d_1 + d_2\right) \end{bmatrix},
\begin{bmatrix}-45a^3\\ \frac{1}{2}\left(e_1 + e_2\right)\end{bmatrix}
\right)
\end{split}
\end{align}

By symmetry, the midpint $\overline{\vec q}$ of the other Watt's mechanism have the following flexes:
\begin{align}
\begin{split}
&\left(
\overline{\vec q}^{(1)},
\overline{\vec q}^{(2)},
\overline{\vec q}^{(3)},
\overline{\vec q}^{(4)},
\overline{\vec q}^{(5)},
\overline{\vec q}^{(6)}
\right)
=\\
&\left(
\begin{bmatrix}0\\0\end{bmatrix},
\begin{bmatrix}0\\ \overline{a}\end{bmatrix},
\begin{bmatrix}0\\ \overline{b}\end{bmatrix},
\begin{bmatrix}0\\ \frac{1}{2}\left(\overline{c_1} + \overline{c_2}\right)\end{bmatrix},
\begin{bmatrix}0\\ \frac{1}{2}\left(\overline{d_1} + \overline{d_2}\right) \end{bmatrix},
\begin{bmatrix}45\overline{a}^3\\ \frac{1}{2}\left(\overline{e_1} + \overline{e_2}\right)\end{bmatrix}
\right)
\end{split}
\end{align}
Horizontal bar connecting $\vec q$ and $\overline{\vec q}$ has the direction of $\begin{bmatrix}1\\0\end{bmatrix}$.
For this bar, (\ref{eq:d1})-(\ref{eq:d3}) are all $0$.
Now from~(\ref{eq:d4}) being $0$, we get $a=\overline{a}$, which automatically make~(\ref{eq:d5}) $0$.
The last term~(\ref{eq:d6}) being $0$ gives $9a^3 + (\overline{b}-b)^2 = 0$.
This equation well describes the assymetric behavior of the mechanism at the cusp point (Figure~\ref{fig:cusp-details});
\begin{enumerate}
\item it forces that $a < 0$, meaning that the horizontal bar must drop down instead of going up; and
\item the sign of $\overline{b}-b = \pm \sqrt{-a^3}$ determines which way the bar is going to be tilted.
\end{enumerate}
\begin{figure}[htbp]
\begin{center}
  \includegraphics[width=0.75\linewidth,page=2]{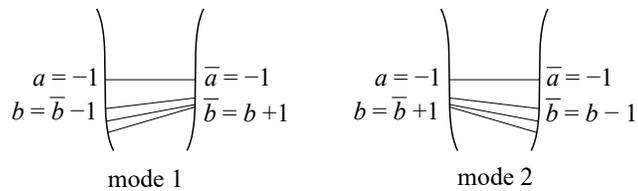}
\end{center}
  \caption{The bifurcation of the motion of horizontal bar.}
 \label{fig:cusp-details}
\end{figure}
\section{Discussion}
The definition of measuring the order of edge elongation with respect to the arclength along the kinematic path solves the issue of infinitesimal rigidity.
There are variants of the measure we can use instead of arclength, e.g.,  the displacement $d = \left\|\vec X(t) - \vec X_0\right\|$ because they are related to each other in first order.
In fact, the definition straightforwardly relates the displacement and the elongation of the element, so I believe it is a ``natural'' definition that engineers would agree.

Non integer number of degrees of rigidity can be potentially defined in the proposed definition.
An open question arises from here: are there systems with tightly non-integer order of rigidity, e.g., are there $3$-rd order flexible but $3.5$-th order rigid systems?

\end{document}